\DeclareMathAlphabet{\mathcal}{OMS}{cmsy}{m}{n}
\DeclarePairedDelimiter\paren{\lparen}{\rparen}
\newtheorem{exam}{Example}
\newtheorem{theorem}{Theorem}
\newtheorem{remark}{Remark}
\newcommand{\bbR}{\mathbb{R}}
\newcommand{\rmd}{\mathrm{d}}
\begin{document}

\begin{frontmatter}

\title{Generalization of partitioned Runge--Kutta methods for adjoint systems}
\tnotetext[mytitlenote]{Fully documented templates are available in the elsarticle package on \href{http://www.ctan.org/tex-archive/macros/latex/contrib/elsarticle}{CTAN}.}

\author[tm1,tm2]{Takeru Matsuda}
\address[tm1]{Department of Mathematical Informatics, Graduate School of Information Science and Technology, The University of Tokyo, Japan}
\address[tm2]{Mathematical Informatics Collaboration Unit, RIKEN Center for Brain Science, Japan}

\author[ym]{Yuto Miyatake}
\address[ym]{Cybermedia Center, Osaka University, Japan}

\begin{abstract}
This study computes the gradient of a function of numerical solutions of ordinary differential equations (ODEs) with respect to the initial condition.
The adjoint method computes the gradient approximately by solving the corresponding adjoint system numerically.
In this context, Sanz-Serna [SIAM Rev., 58 (2016), pp. 3--33] showed that when the initial value problem is solved by a Runge--Kutta (RK) method, the gradient can be exactly computed by applying an appropriate RK method to the adjoint system.
Focusing on the case where the initial value problem is solved by a partitioned RK (PRK) method, this paper presents a numerical method, which can be seen as a generalization of PRK methods, for the adjoint system that gives the exact gradient.

\end{abstract}

\begin{keyword}
adjoint method \sep partitioned Runge--Kutta method \sep geometric integration 
\MSC[2010] 	65K10 \sep 65L05 \sep 65L09 \sep 65P10
\end{keyword}

\end{frontmatter}

\section{Introduction}
\label{sec1}

We consider a system of ordinary differential equations (ODEs) of the form
\begin{align}
	\label{eq:original1}
	\frac{\rmd}{\rmd t} x(t;\theta) = f(x(t;\theta))
\end{align}
with the initial condition $x(0;\theta) = \theta \in\bbR^d$,
where $f:\bbR^d\to\bbR^d$ is assumed to be sufficiently smooth.
We denote the numerical approximation at $t_n = nh$ by $x_n(\theta) \approx x(nh;\theta)$, where $h$ is the step size.
Let $C:\bbR^d\to\bbR$ be a differentiable function.
This paper is concerned with the computation of $\nabla_\theta C(x_n(\theta))$, i.e. the gradient of $C(x_n(\theta)) $ with respect to the initial condition $\theta$.

Calculating the gradient $\nabla_\theta C(x_n(\theta))$ is a fundamental task when solving optimization problems such as
\begin{align} \label{opt_prob}
\min_\theta \sum_{n=1}^N C(x_n(\theta)).
\end{align}
A simple method of obtaining an approximation for the gradient is to compare $C(x_n(\theta))$ with the numerical approximation corresponding to a perturbed initial condition.
For example, 
$(C(x_n (\theta + \Delta e_i)) - C(x_n (\theta)))/\Delta
$
may be regarded as an approximation of the $i$th component of the gradient,
where $\Delta$ is a small scalar constant, and $e_i$ is the $i$th column of the $d$-dimensional identity matrix.
However, this approach becomes computationally expensive when the dimensionality $d$ or the number of time steps $N$ is large.
Further, the approximation accuracy could deteriorate significantly due to the discretization error.
Alternatively, the adjoint method has been widely used in various fields such as variational data assimilation in meteorology and oceanography~\cite{di86}, inversion problems in seismology~\cite{fi06},
optimal design in aerodynamics~\cite{gi00}, and neural networks~\cite{ch18}.
In this approach, the gradient is approximated by integrating the so-called adjoint system numerically. 
This approach is usually more efficient than the aforementioned approach using perturbed initial conditions; however, the approximation accuracy may still be limited when the collected discretization errors are large.

Recently, Sanz-Serna showed that the gradient $\nabla_\theta C(x_n(\theta))$ can be computed \emph{exactly} when the original system is integrated by a Runge--Kutta (RK) method~\cite{ss16}:
the intended exact gradient is obtained by applying an appropriate RK method to the adjoint system.
Note that even if the numerical integration of the original system \eqref{eq:original1} is not sufficiently accurate, a sufficiently accurate approximation or the exact computation of the gradient $\nabla_\theta C(x_n(\theta))$ is often required.
We provide two examples:
\begin{itemize}
\item The forward propagation of several deep neural networks is interpreted as a discretization of ODEs.
For example, the Residual Network (ResNet), which is commonly used in pattern recognition tasks such as image classification, can be seen as an explicit Euler discretization~\cite{cr18}.
Also, neural network architectures based on symplectic integrators for Hamiltonian systems have been developed recently, which avoid numerical instabilities~\cite{hr18}.
Since the output of such neural networks is not the exact solution of ODE but its numerical approximation, their training is formulated as an optimization problem of the form \eqref{opt_prob}.
In other words, the backpropagation algorithm~\cite{gbc16} is a special case of the adjoint method in this context.
\item Let us consider solving the optimization problem of the form \eqref{opt_prob}.
If we apply the Newton method, we need to solve a linear system having the Hessian of the objective function with respect to the initial state $\theta$ as the coefficient matrix.
When the linear system is solved by a Krylov subspace method such as the conjugate gradient (CG) method, the Hessian-vector multiplication needs to be computed.
The adjoint method can be used to approximate the Hessian-vector multiplication~\cite{wa98,wa92}.\footnote{More precisely, the Hessian-vector multiplication is approximately computed by solving the so-called second-order adjoint system numerically.}
However, this approach usually results in applying the CG method to a non-symmetric linear system even though the exact Hessian is always symmetric, and consequently, its convergence is not always guaranteed.
Symmetry can be ideally attained if the Hessian-vector multiplication is computed exactly~\cite{im19}.
We note that the need for computing the exact Hessian-vector multiplication also arises in the context of uncertainty quantification (see, e.g.~\cite{it16,th89}).
\end{itemize}

To the best of authors' knowledge, an algorithm that systematically computes the exact gradient has been developed only for the cases where the ODE system \eqref{eq:original1} is discretized by using RK methods; similar algorithms should be developed for other types of numerical methods.
We note that while it may not be so difficult to construct an algorithm for a specific ODE solver, providing a recipe for a particular class of numerical methods is useful for practitioners.
Among others, we focus on the class of partitioned RK (PRK) methods.
A straightforward conjecture would be that the exact gradient is obtained by applying an appropriate PRK method to the adjoint system.
Indeed, our previous report showed that this conjecture is true for the Störmer--Verlet method~\cite{mm19}.
However, except for some special cases, this conjecture does not hold in general. 
In this study, we shall show that the exact gradient can be computed by a certain generalization of PRK methods.

In this paper, after reviewing the approach proposed by Sanz-Serna in Section~\ref{sec2}, we show main results in Section~\ref{sec:prk}.
Several examples are provided in Section~\ref{sec:examples}, and concluding remarks are given in Section~\ref{sec:conclusion}.

\section{Preliminaries}
\label{sec2}

In this section, 
we review the adjoint method and the approach proposed by Sanz-Serna~\cite{ss16}.

\subsection{Adjoint method}
\label{sec21}
Let $\overline{x}(t)$ be the solution to the system \eqref{eq:original1} for the perturbed initial condition $\overline{x}(0) = \theta + \varepsilon$.
By linearising the system \eqref{eq:original1} at $x(t)$,
we see that as $\|\varepsilon\|\to 0$ it follows that $\overline{x}(t)	= x(t) + \delta (t) + \mathrm{o}(\|\varepsilon\|)$, where $\delta (t)$ solves the variational system
\begin{align} 
\label{vari_1}
\frac{\rmd}{\rmd t}\delta(t) = \nabla_x f(x(t)) \delta(t) .
\end{align}
The corresponding adjoint system, which is often introduced by using Lagrange multipliers, is given by
\begin{align} \label{adj_eq1}
\frac{\rmd}{\rmd t} \lambda(t) = - \nabla_x f(x(t)) ^\top \lambda (t).
\end{align}
Here, $\nabla_x f(x(t))^\top$ is the transpose of the Jacobian $\nabla_x f(x(t))$.
For any solutions to \eqref{vari_1} and \eqref{adj_eq1}, 
we see that $\frac{\rmd}{\rmd t}\lambda(t)^\top \delta(t)=0$;
thus, it follows that $\lambda (t)^\top \delta (t) = \lambda(0)^\top \delta (0)$ for all $t>0$, so in particular, 
\begin{align} \label{ld1}
    \lambda (t_N)^\top \delta (t_N) = \lambda(0)^\top \delta (0).
\end{align}
Because of the chain rule 
$
\nabla_\theta C(x(t_N;\theta)) = \nabla_\theta x(t_N;\theta)^\top
\nabla_x C(x(t_N;\theta))
$
and the fact
$
\delta (t) = (\nabla_\theta x (t;\theta)) \delta (0)
$,
we have
\begin{align} 
   \nabla_x C(x(t_N;\theta))^\top \delta (t_N) = 
   \nabla_x C(x(t_N;\theta))^\top
   (\nabla_\theta x (t_N;\theta)) \delta (0) 
   =
   \nabla_\theta C(x(t_N;\theta))^\top \delta (0). \label{ld2}
\end{align}
By comparing \eqref{ld2} with \eqref{ld1},
it is concluded that
the solution to the adjoint system \eqref{adj_eq1} with $\lambda(t_N) = \nabla_x C(x(t_N;\theta))$ satisfies $\lambda(0) = \nabla_\theta C(x(t_N;\theta))$.
In other words, solving the adjoint system \eqref{adj_eq1} backwardly with the final condition $\lambda(t_N) = \nabla_x C(x(t_N;\theta))$ gives $\lambda(0) = \nabla_\theta C(x(t_N;\theta))$.

Now, we compute the gradient $\nabla_\theta C(x_N(\theta))$.\footnote{As a particular case of $\nabla_\theta C(x_n(\theta))$, $n=1,\dots,N$, here, we consider $\nabla_\theta C(x_N(\theta))$.}
The above discussion indicates that approximating the solution to the adjoint system \eqref{adj_eq1} with the final condition $\lambda(t_N) = \nabla_x C (x_N(\theta))$ gives an approximation of $\nabla_\theta C(x_N(\theta))$ at $t=0$.
In general,
the approximation accuracy depends on the accuracy of the numerical integrators for both the original system \eqref{eq:original1} and adjoint system \eqref{adj_eq1}.

\subsection{Exact gradient calculation}
\label{sec22}

In the following, we assume that $x_n(\theta)$ is the approximation obtained by applying an RK method to the original system \eqref{eq:original1}.
In this case,
Sanz-Serna showed in~\cite{ss16} that the gradient 
$\nabla _\theta C(x_N(\theta))$ can be computed exactly (up to round-off in practical computation) by applying an appropriate RK method to the adjoint system \eqref{adj_eq1}.
Below, we briefly review the procedure.
The obvious argument $(\theta)$ will often be omitted.

Suppose that the original system \eqref{eq:original1} has been discretized by an $s$-stage RK method characterized by $\{a_{ij}\}$ and $\{b_i\}$:
\begin{align}
x_{n+1} &= x_n + h \sum_{i=1}^s b_i k_{n,i}, \\
k_{n,i} &= f\paren*{X_{n,i}}, \quad i = 1,\dots,s , \\
X_{n,i} &= x_n + h\sum_{j=1}^s a_{ij} k_{n,j}, \quad i=1,\dots,s
\end{align} 
with the initial condition $x_0 = \theta$.
The pair of $\{a_{ij}\}$ and $\{b_i\}$ will be simply denoted by $(a,b)$.
Correspondingly, the variational system \eqref{vari_1} is discretized by the same RK method
\begin{align}
\delta_{n+1} &= \delta_n + h \sum_{i=1}^s b_i d_{n,i}, \\
d_{n,i} &= \nabla_x f(X_{n,i}) \Delta_{n,i}, \quad i=1,\dots,s,\\
\Delta_{n,i} &= \delta_n + h\sum_{j=1}^s a_{ij} d_{n,j}, \quad i = 1,\dots,s
\end{align}
so that $\delta _n = (\nabla_\theta x_n (\theta)) \delta_0$.
We discretize the adjoint system \eqref{adj_eq1} with an $s$-stage RK method, which may differ from the RK method $(a,b)$,  characterized by the coefficients $(A,B)$:
\begin{align}
\begin{aligned}
\lambda_{n+1} &= \lambda_n + h \sum_{i=1}^s B_i l_{n,i}, \\
l_{n,i} &= -\nabla_x f(X_{n,i})^\top\Lambda_{n,i}, \quad i=1,\dots,s,\\
\Lambda_{n,i} &= \lambda_n + h \sum_{j=1}^s A_{ij} l_{n,j}, \quad i=1,\dots,s
\end{aligned}
\label{adj_RK_1}
\end{align}
with the final condition $\lambda_N = \nabla_x C(x_N(\theta))$.

Combining the chain rule $\nabla_\theta  C (x_N(\theta)) = \nabla_\theta x_N(\theta)^\top \nabla_x C (x_N (\theta))$ with $\delta_n = (\nabla_\theta x_n (\theta)) \delta_0$,
we see that
$\nabla_x C(x_N(\theta))^\top \delta _N 
=
\nabla_\theta C (x_N(\theta))^\top \delta_0$.
Therefore, if the approximation of the solution  to the adjoint system \eqref{adj_eq1} satisfies 
$
\lambda_n^\top \delta_n = \lambda_0^\top\delta_0
$,
in particular $\lambda_N^\top \delta_N = \lambda_0^\top\delta_0$, it is concluded that $\lambda_0 = \nabla_\theta C(x_N(\theta))$.
The theory of geometric numerical integration~\cite{hl06} tells us that if an RK method is chosen for the adjoint system such that the pair of the RK methods for the original and adjoint systems satisfies the condition for a partitioned RK (PRK) method to be canonical, the property $\lambda_N^\top \delta _N = \lambda_0^\top \delta_0$ is guaranteed and the gradient $\nabla _\theta C(x_N(\theta))$ is exactly obtained as shown in Theorem~\ref{thm:ss}.
We note that the canonical numerical methods are well-known in the context of geometric numerical integration.
For more details, we refer the reader to~\cite[Chapter VI]{hl06} (for PRK methods, see also~\cite{as93,su93}).

\begin{theorem}[\cite{ss16}]
\label{thm:ss}
Assume that the original system \eqref{eq:original1} is solved by an RK method $(a,b)$, and
the coefficients $(A,B)$ of the RK method for the adjoint system \eqref{adj_eq1} satisfy
\begin{align}
& b_i = B_i, \quad i = 1,\dots,s , \\
& b_i A_{ij} + B_j a_{ji} = b_i B_j, \quad i,j=1,\dots,s.
\end{align}
Then, by solving the adjoint system with the final condition $\lambda_N (=\lambda(t_N
))= \nabla_x C(x_N(\theta))$, we obtain the exact gradient $\lambda_0 = \nabla _\theta C(x_N(\theta))$.
\end{theorem}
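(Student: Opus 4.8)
The plan is to reduce the claim to a discrete conservation law and then invoke the same chain-rule argument that was used for the continuous systems in Section~\ref{sec21}. Concretely, I would first prove that, under the stated hypotheses, the bilinear quantity $\lambda_n^\top\delta_n$ is invariant along the discrete flow, i.e.
\begin{align}
\lambda_{n+1}^\top\delta_{n+1} = \lambda_n^\top\delta_n, \qquad n=0,1,\dots,N-1,
\end{align}
so that in particular $\lambda_N^\top\delta_N = \lambda_0^\top\delta_0$. Granting this, the theorem follows: since the variational RK recursion is the linearization of the RK map we have $\delta_N = (\nabla_\theta x_N)\delta_0$, and the chain rule gives $\nabla_x C(x_N)^\top\delta_N = \nabla_\theta C(x_N)^\top\delta_0$; choosing $\lambda_N = \nabla_x C(x_N)$ and combining with $\lambda_N^\top\delta_N = \lambda_0^\top\delta_0$ yields $\lambda_0^\top\delta_0 = \nabla_\theta C(x_N)^\top\delta_0$. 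Since $\lambda_0$ does not depend on $\delta_0$ and $\delta_0\in\bbR^d$ is arbitrary, this forces $\lambda_0 = \nabla_\theta C(x_N)$.

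The core of the proof is the one-step identity. I would expand $\lambda_{n+1}^\top\delta_{n+1}$ using the two update formulas: it equals $\lambda_n^\top\delta_n$ plus the mixed linear term $h\sum_j b_j\,\lambda_n^\top d_{n,j} + h\sum_i B_i\,l_{n,i}^\top\delta_n$ plus the quadratic term $h^2\sum_{i,j}B_i b_j\,l_{n,i}^\top d_{n,j}$. The standard trick is then to eliminate $\lambda_n$ and $\delta_n$ from the mixed term by substituting the internal-stage relations from \eqref{adj_RK_1} and its variational analogue, namely $\lambda_n = \Lambda_{n,j} - h\sum_k A_{jk}l_{n,k}$ and $\delta_n = \Delta_{n,i} - h\sum_k a_{ik}d_{n,k}$.

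After this substitution the remaining terms group naturally by powers of $h$. The $O(h)$ part is $h\sum_i\left(b_i\,\Lambda_{n,i}^\top d_{n,i} + B_i\,l_{n,i}^\top\Delta_{n,i}\right)$; using that $d_{n,i}=\nabla_x f(X_{n,i})\Delta_{n,i}$ and $l_{n,i}=-\nabla_x f(X_{n,i})^\top\Lambda_{n,i}$ involve the \emph{same} Jacobian at the \emph{same} stage node, each summand becomes $(b_i-B_i)\,\Lambda_{n,i}^\top\nabla_x f(X_{n,i})\Delta_{n,i}$, which vanishes by the hypothesis $b_i=B_i$. The $O(h^2)$ part is a sum of three double sums in the scalars $l_{n,i}^\top d_{n,j}$; after relabeling indices consistently, the coefficient of $l_{n,i}^\top d_{n,j}$ comes out to be $B_i b_j - B_i a_{ij} - b_j A_{ji}$, which is exactly zero by the second hypothesis $b_i A_{ij}+B_j a_{ji}=b_i B_j$ read with $i$ and $j$ interchanged. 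This closes the one-step identity, and induction on $n$ gives the conservation law.

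I do not anticipate a deep obstacle: this is precisely the classical computation showing that a PRK pair $(a,b)$--$(A,B)$ satisfying the above relations is canonical, now applied to the pairing $\lambda^\top\delta$ between the discretized variational and adjoint flows rather than to $\mathrm{d}p\wedge\mathrm{d}q$. The only genuinely delicate point is careful bookkeeping --- keeping each Jacobian factor $\nabla_x f(X_{n,i})$ attached to the correct stage index so the $O(h)$ terms collapse as claimed, and relabeling the summation indices in the $O(h^2)$ terms so that the assumed symplecticity condition appears in the exact index pattern in which it was stated.
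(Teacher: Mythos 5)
Your proposal is correct and follows essentially the same route as the paper: the paper states Theorem~\ref{thm:ss} as a cited result, but its justification in Section~\ref{sec22} (discrete conservation of $\lambda_n^\top\delta_n$ combined with the chain rule) and the detailed expand--substitute--regroup computation in the proof of the generalized Theorem~\ref{thm:new} are exactly the argument you give. Your bookkeeping of the $O(h)$ and $O(h^2)$ coefficients, including the index interchange needed to invoke $b_iA_{ij}+B_ja_{ji}=b_iB_j$, checks out.
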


\begin{remark}
The conditions in Theorem~\ref{thm:ss} indicate that 
$ A_{ij} = b_j - (b_j/b_i)a_{ji}$,
which makes sense only when every weight $b_i$ is nonzero.
However, for some RK methods
one or more of the weights $b_i$ vanish.
In such cases, the above conditions cannot be used to find an appropriate RK method for the adjoint system.
We refer the reader to Appendix in~\cite{ss16} for a workaround.
\end{remark}

\section{Partitioned Runge--Kutta methods}
\label{sec:prk}

We now restrict our attention to the system of ODEs in the partitioned form
\begin{align}
\label{eq:p_original1}
\frac{\rmd}{\rmd t}
\begin{bmatrix}
x^{[1]} \\ x^{[2]} 
\end{bmatrix}
=
\begin{bmatrix}
f^{[1]} \paren*{ x^{[1]}, x^{[2]}} \\
f^{[2]} \paren*{ x^{[1]}, x^{[2]}}
\end{bmatrix},
\quad
\begin{bmatrix}
x^{[1]} (0) \\ x^{[2]}  (0)
\end{bmatrix}
=
\begin{bmatrix}
\theta^{[1]} \\ \theta^{[2]}
\end{bmatrix}.
\end{align}
Suppose that this system has been discretized by a PRK method characterized by the pairs $(a^{[1]},b^{[1]})$ and $(a^{[2]},b^{[2]})$:
\begin{align}
\begin{bmatrix} \displaystyle
x_{n+1}^{[1]}  \\ x_{n+1}^{[2]}
\end{bmatrix}
&= 
\begin{bmatrix}
x_n^{[1]} \\ x_n^{[2]}
\end{bmatrix}
 + h \sum_{i=1}^s
\begin{bmatrix} 
\displaystyle
  b_i^{[1]} k_{n,i}^{[1]}\\
\displaystyle
  b_i^{[2]} k_{n,i}^{[2]}
\end{bmatrix},\\
\begin{bmatrix}
k_{n,i}^{[1]}  \\ k_{n,i}^{[2]} 
\end{bmatrix}
&=
\begin{bmatrix}
f^{[1]} \paren*{X_{n,i}^{[1]},X_{n,i}^{[2]}} \\
 f^{[2]} \paren*{X_{n,i}^{[1]},X_{n,i}^{[2]}}
\end{bmatrix}, \quad i = 1,\dots,s,\\
\begin{bmatrix}
X_{n,i}^{[1]} \\
X_{n,i}^{[2]} 
\end{bmatrix}
&=
\begin{bmatrix}
x_n^{[1]} \\
x_n^{[2]}
\end{bmatrix}
+
h \sum_{j=1}^s
\begin{bmatrix} 
\displaystyle
   a_{ij}^{[1]} k_{n,j}^{[1]}\\
\displaystyle
  a_{ij}^{[2]} k_{n,j}^{[2]}
\end{bmatrix}, \quad i = 1,\dots,s.
\end{align}
For clarity, every component of $b^{[1]}$ and $b^{[2]}$ is assumed to be nonzero.
We discretize the variational system expressed as
\begin{align}
\label{eq:p_variational1}
    \frac{\rmd}{\rmd t}
    \begin{bmatrix}
    \delta^{[1]} \\ \delta^{[2]}
    \end{bmatrix}
    =
    \begin{bmatrix}
    \nabla_{x^{[1]}} f^{[1]} & \nabla_{x^{[2]}} f^{[1]} \\
    \nabla_{x^{[1]}} f^{[2]} & \nabla_{x^{[2]}} f^{[2]}
    \end{bmatrix}
    \begin{bmatrix}
    \delta^{[1]} \\ \delta^{[2]}
    \end{bmatrix}
\end{align}
by the same PRK method
\begin{subequations}
\begin{align}
    \begin{bmatrix}
    \delta_{n+1}^{[1]} \\ \delta_{n+1}^{[2]}
    \end{bmatrix}
    &=
    \begin{bmatrix}
    \delta_n^{[1]} \\ \delta_n^{[2]}
    \end{bmatrix}
    +
    h \sum_{i=1}^s 
    \begin{bmatrix}
    b_i ^{[1]} m_{n,i}^{[1]} \\
    b_i ^{[2]} m_{n,i}^{[2]}
    \end{bmatrix},\label{PRKv1} \\
    \begin{bmatrix}
    m_{n,i}^{[1]}  \\  m_{n,i}^{[2]} 
    \end{bmatrix}
    &=
    \begin{bmatrix}
    (\nabla_{x^{[1]}}f^{[1]}_{n,i}) D_{n,i}^{[1]}
    +
    (\nabla_{x^{[2]}}f^{[1]}_{n,i}) D_{n,i}^{[2]} \\
    (\nabla_{x^{[1]}}f^{[2]}_{n,i}) D_{n,i}^{[1]}
    +
    (\nabla_{x^{[2]}}f^{[2]}_{n,i}) D_{n,i}^{[2]}
    \end{bmatrix}, \quad i = 1,\dots, s, \label{PRKv2}\\
    \begin{bmatrix}
    D_{n,i}^{[1]} \\
    D_{n,i}^{[2]}
    \end{bmatrix}
    &=
    \begin{bmatrix}
    \delta_n^{[1]} \\ \delta_n^{[2]}
    \end{bmatrix}
    +
    h \sum_{j=1}^s 
    \begin{bmatrix}
    a_{ij} ^{[1]} m_{n,j}^{[1]} \\
    a_{ij} ^{[2]} m_{n,j}^{[2]}
    \end{bmatrix}, \quad i =1,\dots,s. \label{PRKv3}
\end{align}
\end{subequations}

The corresponding adjoint system of \eqref{eq:p_original1} is written as
\begin{align}
\label{p_adj_eq1}
\frac{\rmd}{\rmd t}
\begin{bmatrix}
\lambda^{[1]} \\
\lambda^{[2]}
\end{bmatrix}
=
-
\begin{bmatrix}
\nabla_{x^{[1]}} f^{[1]} (x^{[1]},x^{[2]}) &
\nabla_{x^{[2]}} f^{[1]} (x^{[1]},x^{[2]}) \\
\nabla_{x^{[1]}} f^{[2]} (x^{[1]},x^{[2]}) &
\nabla_{x^{[2]}} f^{[2]} (x^{[1]},x^{[2]})
\end{bmatrix}^\top
\begin{bmatrix}
\lambda^{[1]} \\
\lambda^{[2]}
\end{bmatrix},
\end{align}
for which the following final condition
\begin{align}
\label{p_fsc}
\begin{bmatrix}
\lambda^{[1]} (t_N) \\
\lambda^{[2]} (t_N)
\end{bmatrix}
=
\begin{bmatrix}
\nabla_{x^{[1]}} C (x_N^{[1]},x_N^{[2]}) \\
\nabla_{x^{[2]}} C (x_N^{[1]},x_N^{[2]})
\end{bmatrix}
\end{align}
is imposed.
We consider discretizing the adjoint system by the following formula
\begin{subequations}
\begin{align}
\begin{bmatrix}
\lambda^{[1]}_{n+1} \\ 
\lambda^{[2]}_{n+1}
\end{bmatrix}
&=
\begin{bmatrix}
\lambda^{[1]}_{n} \\ 
\lambda^{[2]}_{n}
\end{bmatrix}
-h
\sum_{i=1}^s
\begin{bmatrix}
B_i^{[11]} l_{n,i}^{[11]}
+
B_i^{[12]} l_{n,i}^{[12]} \\
B_i^{[21]} l_{n,i}^{[21]}
+
B_i^{[22]} l_{n,i}^{[22]}
\end{bmatrix}, \label{GPRK1}\\
\begin{bmatrix}
l_{n,i}^{[11]} & l_{n,i}^{[12]}  \\
l_{n,i}^{[21]}  & l_{n,i}^{[22]}
\end{bmatrix}
&=
\begin{bmatrix}
(\nabla_{x^{[1]}} f_{n,i}^{[1]})^\top \Lambda_{n,i}^{[1]} &
(\nabla_{x^{[1]}} f_{n,i}^{[2]})^\top \Lambda_{n,i}^{[2]} \\
(\nabla_{x^{[2]}} f_{n,i}^{[1]})^\top \Lambda_{n,i}^{[1]} &
(\nabla_{x^{[2]}} f_{n,i}^{[2]})^\top \Lambda_{n,i}^{[2]}
\end{bmatrix},\quad i = 1,\dots,s, \label{GPRK2}\\
\begin{bmatrix}
\Lambda^{[1]}_{n,i} \\ 
\Lambda^{[2]}_{n,i}
\end{bmatrix}
&=
\begin{bmatrix}
\lambda^{[1]}_{n} \\ 
\lambda^{[2]}_{n}
\end{bmatrix}
-h
\sum_{j=1}^s
\begin{bmatrix}
A_{ij}^{[11]} l_{n,j}^{[11]}
+
A_{ij}^{[12]} l_{n,j}^{[12]} \\
A_{ij}^{[21]} l_{n,j}^{[21]}
+
A_{ij}^{[22]} l_{n,j}^{[22]}
\end{bmatrix}, \quad i = 1,\dots,s.\label{GPRK3}
\end{align}
\end{subequations}

We note that the above formula (\eqref{GPRK1}, \eqref{GPRK2}, \eqref{GPRK3}) does not necessarily belong to the class of PRK methods, and thus refer to the formula as a generalized PRK (GPRK) method.\footnote{This kind of generalization is only applicable for partitioned systems in which the vector field is decomposed into two parts.}
The formula falls into a PRK method only when the coefficients satisfy
\begin{align}\label{reduction}
    B^{[11]} = B^{[12]}, \quad B^{[21]} = B^{[22]}, \quad
    A^{[11]} = A^{[12]}, \quad A^{[21]} = A^{[22]}.
\end{align}

The following theorem shows that the gradient can be computed exactly by an appropriate GPRK method.

\begin{theorem}
\label{thm:new}
Assume that the original system \eqref{eq:p_original1} is solved by a PRK method characterized by $(a^{[1]},b^{[1]})$ and $(a^{[2]},b^{[2]})$, and
the coefficients of the GPRK method \eqref{GPRK1}-\eqref{GPRK3} for the adjoint system \eqref{p_adj_eq1} satisfy
\begin{align}
& b_i^{[1]} = B_i^{[11]}=B_i^{[21]}, \quad i = 1,\dots,s , \\
& b_i^{[2]} = B_i^{[12]}=B_i^{[22]}, \quad i = 1,\dots,s , \\
& b_i^{[1]} A_{ij}^{[11]} + B_j^{[11]} a_{ji}^{[1]} = b_i^{[1]} B_j^{[11]}, \quad i,j=1,\dots,s, \\
& b_i^{[1]} A_{ij}^{[12]} + B_j^{[12]} a_{ji}^{[1]} = b_i^{[1]} B_j^{[12]}, \quad i,j=1,\dots,s, \\
& b_i^{[2]} A_{ij}^{[21]} + B_j^{[21]} a_{ji}^{[2]} = b_i^{[2]} B_j^{[21]}, \quad i,j=1,\dots,s, \\
& b_i^{[2]} A_{ij}^{[22]} + B_j^{[22]} a_{ji}^{[2]} = b_i^{[2]} B_j^{[22]}, \quad i,j=1,\dots,s.
\end{align}
Then, by solving the adjoint system with the final condition \eqref{p_fsc}, we obtain the exact gradient $\lambda_0^{[1]} = \nabla _{\theta^{[1]}} C(x_N^{[1]}(\theta),x_N^{[2]}(\theta))$ and $\lambda_0^{[2]} = \nabla _{\theta^{[2]}} C(x_N^{[1]}(\theta),x_N^{[2]}(\theta))$.
\end{theorem}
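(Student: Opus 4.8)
The plan is to mimic the proof of Theorem~\ref{thm:ss}: establish the discrete conservation law
$\sum_{\alpha=1}^{2}(\lambda_n^{[\alpha]})^\top\delta_n^{[\alpha]}=\sum_{\alpha=1}^{2}(\lambda_0^{[\alpha]})^\top\delta_0^{[\alpha]}$ for every $n$ (where $\delta_n^{[\alpha]}$ are the internal solutions of \eqref{PRKv1}--\eqref{PRKv3}), and then combine it with the chain rule. First I would prove the one-step identity $\sum_\alpha(\lambda_{n+1}^{[\alpha]})^\top\delta_{n+1}^{[\alpha]}=\sum_\alpha(\lambda_n^{[\alpha]})^\top\delta_n^{[\alpha]}$. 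Substituting \eqref{PRKv1} and \eqref{GPRK1} and expanding the product gives, for each $\alpha$, the term $(\lambda_n^{[\alpha]})^\top\delta_n^{[\alpha]}$, two $O(h)$ terms, and one $O(h^2)$ term. In the $O(h)$ terms I would eliminate $\lambda_n^{[\alpha]}$ by \eqref{GPRK3} (writing it via the internal stage $\Lambda_{n,i}^{[\alpha]}$ and the slopes $l_{n,j}^{[\alpha\beta]}$) and eliminate $\delta_n^{[\alpha]}$ by \eqref{PRKv3} (via $D_{n,i}^{[\alpha]}$ and $m_{n,j}^{[\alpha]}$). The crucial observation is that, because of the block structure of \eqref{PRKv2} and \eqref{GPRK2}, transposing the Jacobian blocks yields $(\Lambda_{n,i}^{[1]})^\top m_{n,i}^{[1]}=(l_{n,i}^{[11]})^\top D_{n,i}^{[1]}+(l_{n,i}^{[21]})^\top D_{n,i}^{[2]}$ and $(\Lambda_{n,i}^{[2]})^\top m_{n,i}^{[2]}=(l_{n,i}^{[12]})^\top D_{n,i}^{[1]}+(l_{n,i}^{[22]})^\top D_{n,i}^{[2]}$; then, using $B_i^{[11]}=B_i^{[21]}=b_i^{[1]}$ and $B_i^{[12]}=B_i^{[22]}=b_i^{[2]}$, the two $O(h)$ contributions cancel identically.

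It then remains to show the $O(h^2)$ part vanishes. Collecting the three sources of $h^2$ terms — one produced when eliminating $\lambda_n^{[\alpha]}$, one when eliminating $\delta_n^{[\alpha]}$, and the genuine cross term $-h^2\sum_{i,j}(\cdots)$ coming from $(\sum_i B l)^\top(\sum_j b\, m)$ — and relabelling summation indices so that every term is a multiple of $(l_{n,i}^{[\alpha\beta]})^\top m_{n,j}^{[\alpha]}$, the coefficient of $(l_{n,i}^{[\alpha\beta]})^\top m_{n,j}^{[\alpha]}$ is exactly $b_i^{[\alpha]}A_{ij}^{[\alpha\beta]}+B_j^{[\alpha\beta]}a_{ji}^{[\alpha]}-b_i^{[\alpha]}B_j^{[\alpha\beta]}$, which is zero by the four canonicity-type conditions assumed in the theorem (one for each $(\alpha,\beta)\in\{1,2\}^2$). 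Hence $\sum_\alpha(\lambda_{n+1}^{[\alpha]})^\top\delta_{n+1}^{[\alpha]}=\sum_\alpha(\lambda_n^{[\alpha]})^\top\delta_n^{[\alpha]}$, and induction gives $\sum_\alpha(\lambda_N^{[\alpha]})^\top\delta_N^{[\alpha]}=\sum_\alpha(\lambda_0^{[\alpha]})^\top\delta_0^{[\alpha]}$.

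Finally, since the PRK discretization of the variational system is the linearization of the PRK discretization of the original system, one has $\delta_N^{[\alpha]}=\sum_{\gamma}(\nabla_{\theta^{[\gamma]}}x_N^{[\alpha]})\,\delta_0^{[\gamma]}$. Combining this with the final condition \eqref{p_fsc} and the chain rule $\nabla_{\theta^{[\gamma]}}C(x_N)=\sum_\alpha(\nabla_{\theta^{[\gamma]}}x_N^{[\alpha]})^\top\nabla_{x^{[\alpha]}}C(x_N)$ gives $\sum_\alpha(\lambda_N^{[\alpha]})^\top\delta_N^{[\alpha]}=\sum_\gamma(\nabla_{\theta^{[\gamma]}}C(x_N))^\top\delta_0^{[\gamma]}$. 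Comparing with the conservation law and using that $\delta_0^{[1]}$ and $\delta_0^{[2]}$ are arbitrary and independent yields $\lambda_0^{[\gamma]}=\nabla_{\theta^{[\gamma]}}C(x_N^{[1]}(\theta),x_N^{[2]}(\theta))$ for $\gamma=1,2$, which is the claim. I expect the main obstacle to be purely organizational: keeping the four superscript labels $[\alpha\beta]$ straight through the expansion and the index relabelling, since a single mismatched superscript would spoil the identification of the cancellation conditions; the underlying computation is the standard symplectic partitioned-Runge--Kutta manipulation.
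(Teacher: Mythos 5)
Your proposal is correct and follows essentially the same route as the paper's proof: expand $\sum_\alpha(\lambda_{n+1}^{[\alpha]})^\top\delta_{n+1}^{[\alpha]}$ via \eqref{PRKv1} and \eqref{GPRK1}, substitute \eqref{PRKv2}, \eqref{PRKv3}, \eqref{GPRK2}, \eqref{GPRK3}, and observe that the $O(h)$ coefficients become $b_i^{[\alpha]}-B_i^{[\alpha\beta]}$ and the $O(h^2)$ coefficients become exactly the four canonicity-type expressions, all of which vanish under the stated conditions. The only difference is cosmetic: you spell out the final chain-rule step, which the paper leaves implicit by reducing to the one-step identity $\lambda_{n+1}^\top\delta_{n+1}=\lambda_n^\top\delta_n$.
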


\begin{proof}
It suffices to show that $\lambda_{n+1}^\top \delta_{n+1} = \lambda_n ^\top \delta_n$.
We see from \eqref{PRKv1} and \eqref{GPRK1} that
\begin{align}
    &(\lambda_{n+1}^{[1]})^\top \delta_{n+1}^{[1]}
    +
    (\lambda_{n+1}^{[2]})^\top \delta_{n+1}^{[2]} \\
    & \quad =
    \paren*{\lambda_n^{[1]}-h\sum_{j=1}^s (B_j^{[11]} l_{n,j}^{[11]} + B_j^{[12]} l_{n,j}^{[12]})}^\top
    \paren*{\delta_n^{[1]} + h \sum_{i=1}^s b_i^{[1]} m_{n,i}^{[1]}} \\
    & \quad \phantom{=}
    +
    \paren*{\lambda_n^{[2]}-h\sum_{j=1}^s (B_j^{[21]} l_{n,j}^{[21]} + B_j^{[22]} l_{n,j}^{[22]})}^\top
    \paren*{\delta_n^{[2]} + h \sum_{i=1}^s b_i^{[2]} m_{n,i}^{[2]}} \\
    & \quad =
    (\lambda_n^{[1]})^\top \delta_n^{[1]}
    +
    (\lambda_n^{[2]})^\top \delta_n^{[2]} \\
    & \quad \phantom{=}
    +
    h\sum_{i=1}^s b_i^{[1]} (\lambda_n^{[1]})^\top m_{n,i}^{[1]}
    -
    h\sum_{j=1}^s (B_j^{[11]} l_{n,j}^{[11]} + B_j^{[12]} l_{n,j}^{[12]} ) ^\top \delta_n^{[1]} 
    -
    h^2\sum_{i,j=1}^s b_i^{[1]} (B_j^{[11]} l_{n,j}^{[11]} + B_j^{[12]} l_{n,j}^{[12]} ) ^\top m_{n,i}^{[1]} \\
    & \quad \phantom{=}
    +
    h\sum_{i=1}^s b_i^{[2]} (\lambda_n^{[2]})^\top m_{n,i}^{[2]}
    -
    h\sum_{j=1}^s (B_j^{[21]} l_{n,j}^{[21]} + B_j^{[22]} l_{n,j}^{[22]} ) ^\top \delta_n^{[2]} 
    -
    h^2\sum_{i,j=1}^s b_i^{[2]} (B_j^{[21]} l_{n,j}^{[21]} + B_j^{[22]} l_{n,j}^{[22]} ) ^\top m_{n,i}^{[2]}.  \label{lamdel_p}
\end{align}
By substituting \eqref{PRKv2}, \eqref{PRKv3}, \eqref{GPRK2} and \eqref{GPRK3} to \eqref{lamdel_p}, we have
\begin{align}
    &(\lambda_{n+1}^{[1]})^\top \delta_{n+1}^{[1]}
    +
    (\lambda_{n+1}^{[2]})^\top \delta_{n+1}^{[2]} \\
    & \quad =   
    (\lambda_n^{[1]})^\top \delta_n^{[1]}
    +
    (\lambda_n^{[2]})^\top \delta_n^{[2]} \\
    & \quad \phantom{=}
    +
    h \sum_{i=1}^s (b_i^{[1]} - B_i^{[11]}) (\lambda_{n,i}^{[1]})^\top (\nabla_{x^{[1]}} f_{n,i}^{[1]}) D_{n,i}^{[1]}
    +
    h \sum_{i=1}^s (b_i^{[1]} - B_i^{[21]}) (\lambda_{n,i}^{[1]})^\top (\nabla_{x^{[2]}} f_{n,i}^{[1]}) D_{n,i}^{[2]} \\
    & \quad \phantom{=}
    +
    h \sum_{i=1}^s (b_i^{[2]} - B_i^{[12]}) (\lambda_{n,i}^{[2]})^\top (\nabla_{x^{[1]}} f_{n,i}^{[2]}) D_{n,i}^{[1]}
    +
    h \sum_{i=1}^s (b_i^{[2]} - B_i^{[22]}) (\lambda_{n,i}^{[2]})^\top (\nabla_{x^{[2]}} f_{n,i}^{[2]}) D_{n,i}^{[2]} \\
    & \quad \phantom{=}
    +
    h^2 \sum_{i,j=1}^s
    (b_i^{[1]} A_{ij}^{[11]} + B_j^{[11]} a_{ji}^{[1]} - b_i^{[1]} B_j^{[11]})  (l_{n,j}^{[11]})^\top m_{n,i}^{[1]}
    +
    h^2 \sum_{i,j=1}^s
    (b_i^{[1]} A_{ij}^{[12]} + B_j^{[12]} a_{ji}^{[1]} - b_i^{[1]} B_j^{[12]})  (l_{n,j}^{[12]})^\top m_{n,i}^{[1]} \\
    & \quad \phantom{=}
    +
    h^2 \sum_{i,j=1}^s
    (b_i^{[2]} A_{ij}^{[21]} + B_j^{[21]} a_{ji}^{[2]} - b_i^{[2]} B_j^{[21]})  (l_{n,j}^{[21]})^\top m_{n,i}^{[2]} 
    +
    h^2 \sum_{i,j=1}^s
    (b_i^{[2]} A_{ij}^{[22]} + B_j^{[22]} a_{ji}^{[2]} - b_i^{[2]} B_j^{[22]})  (l_{n,j}^{[22]})^\top m_{n,i}^{[2]}.
\end{align}
Hence, if the conditions in Theorem~\ref{thm:new} are satisfied, it follows that $\lambda_{n+1}^\top \delta_{n+1} = \lambda_n ^\top \delta_n$.
\end{proof}

\begin{remark} \label{rem:reduction}
We note that if the coefficients of the PRK method applied to the system \eqref{eq:p_original1} satisfy $b^{[1]} = b^{[2]}$, 
the GPRK method, which is uniquely determined from the conditions in Theorem~\ref{thm:new}, satisfy \eqref{reduction}. Therefore, in this case, the GPRK method falls into the category of PRK methods.
\end{remark}

\section{Examples}
\label{sec:examples}
We consider several classes of PRK methods and show corresponding GPRK methods that are uniquely determined from the conditions in Theorem~\ref{thm:new}.

\begin{exam}[Symplectic PRK methods for Hamiltonian systems]

For Hamiltonian systems,
the coefficients of most symplectic PRK methods, such as the Störmer--Verlet method and the 3-stage Lobatto IIIA--IIIB pair, satisfy
\begin{align}
& b_i^{[1]} = b_i^{[2]}, \quad i=1,\dots,s,  \label{symp_cond1}\\
& b_i^{[1]} a_{ij}^{[2]} + b_j^{[2]} a_{ji}^{[1]} 
= b_i^{[1]} b_j^{[2]}, \quad, i,j=1,\dots,s. \label{symp_cond2}
\end{align}
In this case, the coefficients of the GPRK method satisfying the conditions in Theorem~\ref{thm:new} are explicitly given by
\begin{align}
 B^{[11]} = B^{[21]} = B^{[12]} = B^{[22]} = b, \quad
 A^{[11]} = A^{[12]} =a^{[2]}, \quad A^{[21]} = A^{[22]} = a^{[1]},
\end{align}
where $b:= b^{[1]} = b^{[2]}$.
The GPRK method reduces to a PRK method.
We note that the GPRK method for the Störmer--Verlet method is consistent with the one which was presented in the context of inverse problem for ODEs~\cite{mm19}, and essentially the same algorithm for a certain vector field $f$ can also be found in the context of deep neural networks~\cite{hr18}. 
\end{exam}

\begin{exam}[Symplectic PRK methods for separable Hamiltonian systems]

For separable Hamiltonian systems, a PRK method is symplectic even if the first condition \eqref{symp_cond1} is violated.
We consider the case where $b^{[1]} \neq b^{[2]}$.
In this case, the GPRK method satisfying the conditions in Theorem~\ref{thm:new} does not reduces to a PRK method.
However, when the system \eqref{eq:p_variational1} is separable, i.e. $f^{[1]} = f^{[1]} ( x^{[2]})$ and $f^{[2]} = f^{[2]} (x^{[1]})$, we see from \eqref{GPRK2} that $l_{n,i}^{[11]} = l_{n,i}^{[22]} = 0$. Thus the coefficients $B^{[11]}$, $B^{[22]}$, $A^{[11]}$ and $A^{[22]}$ are no longer needed, and the GPRK method reduces to a PRK method.
The remaining coefficients are explicitly given by
\begin{align}
B^{[21]} = b^{[1]}, \quad B^{[12]} = b^{[2]}, \quad
A^{[12]}= a^{[2]}, \quad A^{[21]} = a^{[1]}. 
\end{align}

\end{exam}

\begin{exam}[Spatially partitioned embedded RK methods]

Spatially partitioned embedded RK methods are sometimes applied to a system of ODEs arising from discretizing partial differential equations~\cite{km13}.
The methods belong to a class of PRK methods with $a^{[1]} = a^{[2]} (=a)$ and $b^{[1]} \neq b^{[2]}$.
In this case, the GPRK method satisfying the conditions in Theorem~\ref{thm:new} is no longer a PRK method.
The coefficients are given by
\begin{align}
& B^{[11]} = B^{[21]} =b^{[1]} , \quad
B^{[12]} = B^{[22]} =b^{[2]} , \\
& A_{ij}^{[11]} = b_j^{[1]} - \frac{b_j^{[1]}}{b_i^{[1]}} a_{ji} ,\quad A_{ij}^{[12]} = b_j^{[2]} - \frac{b_j^{[2]}}{b_i^{[1]}} a_{ji} ,\quad A_{ij}^{[21]} = b_j^{[1]} - \frac{b_j^{[1]}}{b_i^{[2]}} a_{ji} ,\quad A_{ij}^{[22]} = b_j^{[2]} - \frac{b_j^{[2]}}{b_i^{[2]}} a_{ji} ,\quad i,j=1,\dots,s.
\end{align}
\end{exam}

\section{Concluding remarks}
\label{sec:conclusion}

In this paper, we have shown that the gradient of a function of numerical solutions to ODEs with respect to the initial condition can be systematically and efficiently computed when the ODE is solved by a partitioned Runge--Kutta method.
The key idea is to apply a certain generalization of partitioned Runge--Kutta methods to the corresponding adjoint system.
The proposed method is applicable to, for example,  estimation of initial condition or underlying system parameters of ODE models and training of deep neural networks.

It is an interesting problem to construct a similar recipe for other types of ODE solvers such as linear multistep methods and exponential integrators. 
We are now working on this issue.

\section*{Acknowledgement}
This work was supported in part by JST ACT-I Grant Number JPMJPR18US, and JSPS Grants-in-Aid for Early-Career Scientists Grant Numbers 16K17550 and 19K20220.

\bibliography{references}

\end{document}